\documentclass[12pt]{amsart}
\usepackage{amsfonts}
\usepackage{amsmath}
\usepackage{amsxtra}
\usepackage{amssymb,latexsym}
\usepackage[mathcal]{eucal}
\usepackage{amscd}
\usepackage[all]{xy}

\makeindex

\newtheorem{theo}{{\bfseries Theorem}}[section]
\newtheorem{prop}[theo]{{\bfseries Proposition}}
\newtheorem{lem}[theo]{{\bfseries Lemma}}
\newtheorem{cor}[theo]{{\bfseries Corollary}}
\newtheorem{df}[theo]{{\bfseries Definition}}

\def \N {\mathbb N}

\def \Z {\mathbb Z}
\def \R {\mathbb R}
\def \Q {\mathbb Q}

\def \I {\mathcal I}

\def \J {\mathcal J}

\def \T {\mathcal T}

\def \aa {{\mathbf a}}
\def \a {\alpha }
\def \b {\beta}

\def \t {\tau}

\usepackage{amssymb,latexsym}
\usepackage[mathcal]{eucal}
\usepackage{amscd}
\usepackage{amsmath}
\usepackage{graphicx}
\usepackage{amsfonts}
\usepackage{amscd}

\numberwithin{equation}{section}

\begin{document}

\title[Asymptotic Pairs]{\bfseries  Asymptotic Pairs for Interval Exchange Transformations}
\vspace{1cm}
\author{Ethan Akin, Alfonso Artigue and Luis Ferrari}
\address{Mathematics Department \\
    The City College \\ 137 Street and Convent Avenue \\
       New York City, NY 10031, USA     }
\email{ethanakin@earthlink.net, julia.saccamano@macaulay.cuny.edu}

\address{Departamento de Matemática y Estadística del Litoral, Cenur Litoral Norte, Universidad de la República, Gral. Rivera 1350, Salto, Uruguay}
\email{artigue@unorte.edu.uy}

\address{Departamento de Matemática y Aplicaciones, Cure, Universidad de la República, Maldonado, Uruguay}
\email{luisrferrari@gmail.com}

\date{October, 2020 }

\begin{abstract} We provide a simple description of asymptotic pairs in the subshift associated with an interval exchange transformation and show that,
under reasonably general conditions, doubly asymptotic pairs do not occur.
\end{abstract}

\keywords{}

\thanks{{\em 2010 Mathematical Subject Classification} }
\vspace{1cm}

\vspace{.5cm} \maketitle

\tableofcontents

\section{Introduction }\label{sec1}

Interval exchange transformations were introduced by Keane \cite{Ke}, see also \cite{V} for a survey.
Keane begins with a partition of $[0,1)$ by left-closed intervals  $\{ J_1,\dots, J_n \}$ and defines a bijection $T$ on $[0,1)$ which is a
translation on each interval. The itinerary map $\I : [0,1) \to \Omega = \{1, \dots, n \}^{\Z}$ is defined by $\I(x)_k = i$ when
$T^kx \in J_i$ for $k \in \Z$ (We will use, as usual, $\Z$ for the set of integers and $\N$ for the set of positive integers).
The cocountable set $R \subset (0,1)$ consisting of the complement of the set of orbits of the endpoints is called the set of regular points.
At points of $R$ the iterates of $T$ and the itinerary map $\I$ are continuous. The function $\I$ maps the homeomorphism $T$ on $R$
to the shift map $S$ on $\Omega$ where $S(\a)_i = \a_{i+1}$. Taking the closure of $\I(R)$ in $\Omega$ we obtain
a closed shift invariant subset $X$.


Under conditions which have become known as the Keane Condition and irreducibility,
Keane showed that the subshift $(X,S)$ is minimal, i.e. all orbits are dense.

It will be convenient to consider at the same time the dual interval exchange map $\tilde T$. This is defined using the right-closed
partition of $(0,1]$ $\{ \tilde J_1,\dots, \tilde J_n \}$ which have the same end-points as before and which uses the same translation maps.
The itinerary map $\tilde \I : (0,1] \to \Omega$  is defined as before. On $R$ we have $\I = \tilde \I$.

By using both we characterize the space $X$ as the union $ \I([0,1)) \cup \tilde \I((0,1])$.  In addition, we are able to provide a complete description of
the asymptotic pairs, under the assumption of irreducibility and the Keane Condition.

If $F$ is a homeomorphism on a compact metric space $K$ then  points $x,y \in K$ are \emph{positively asymptotic}
when $\lim_{i \to \infty} \ d(F^ix, F^iy) = 0$ and \emph{negatively asymptotic} when they are positively asymptotic for $F^{-1}$.
A pair is called \emph{doubly asymptotic} when it is both positively and negatively asymptotic.

Two points
$\a, \b \in \Omega$ are positively asymptotic (or negatively asymptotic) if and only if there exists $N \in \N$ such that $\a_i = \b_i$
for all $i \geq N$ (resp. for all $i \leq -N$).

From the characterization we obtain our main result.

\begin{theo}\label{theo1.1} If an interval exchange transformation is  irreducible, fully split and satisfies the Keane Condition, then no pair of
distinct points is doubly asymptotic for the associated subshift. \end{theo}

Using a different procedure, in \cite{Ki} King gives an example of a subshift without doubly asymptotic pairs.

\section{$T$ Intervals }\label{sec2}

By an \emph{interval} $I$ we mean a bounded, nonempty subset of $\R$ which is connected, or, equivalently,  $x_1 < x_2 \in I$ implies $y \subset I$
for all $y$ with $x_1 < y < x_2$  (no holes).
We call $I$ a \emph{positive interval} when its length $\ell(I)$ is positive. An interval which is not positive is just a singleton in $\R$.
There are four types of intervals. With $a < b$ the intervals $[a,b),(a,b],(a,b),[a,b]$ are said to be of type Lr,lR,lr and LR, respectively.
A singleton is of type LR. For any $x_1, x_2 \in \R$ we write $[x_1,x_2]$ for the closed interval spanned by the two points.

Let $K$ be a bounded subset of $\R$ and $T$ be a transformation on $K$, i.e. $T : K \to K$  a set map. An interval $I$
 is a \emph{$T$ interval} when it is a subinterval  of $K$ such that the restriction $T|I$ is a translation,
 i.e. $Tx - x$ is a constant $c$ for $x$ in
$I$ (constant translation number) so, in particular, $T(I) = I + c$ and $\ell(T(I)) = \ell(I)$. Clearly, $T(I)$ has the same type as $I$.

Notice that if $I$ has end-points $a < b$ and $c > 0$ then $I + c \setminus I$ contains the positive interval $(d,b+c)$ with $d = \max(b, a+c)$.
Similarly, if $c < 0$ then $I + c \setminus I$ contains a positive interval. It follows that
\begin{equation}\label{eq2.1}
I \ \text{a} \ T \ \text{interval with} \ T(I) \subset I \quad \Longrightarrow \quad Tx = x \ \text{for all} \ x \in I.
\end{equation}
For $A \subset \N$ $I$ is a \emph{$T^{A}$ interval} when it is a $T^n$ interval for every  $n \in A$. In particular,
$I$ is a \emph{$T^{\N}$ interval} when it is a $T^n$ interval for every positive $n$.

Clearly, any subinterval of a $T^A$  interval is a $T^A$ interval.

For any $x \in  K$ the singleton $\{ x \}$ is  a $T^{\N}$ interval.

\begin{lem}\label{lem2.1} Let $\{I_p : p \in P \}$ be a family of subintervals of $K$ with $\bigcap_p I_p \not= \emptyset$. The subsets
$\bigcap_p I_p$ and $\bigcup_p I_p$ are subintervals of $K$. If $P$ is finite and all the $I_p$'s are of the same type then
both the intersection and the overlapping union are of the same type as well.

If for $A \subset \N$ all of the $I_p$'s are $T^A$ intervals, then  $\bigcup_p I_p$ is a $T^A$ interval.
\end{lem}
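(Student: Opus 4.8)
The plan is to establish the four assertions in turn. That $\bigcap_p I_p$ is a subinterval of $K$ is immediate: it lies in $K$, is bounded, is nonempty by hypothesis, and has no holes since $x_1<y<x_2$ with $x_1,x_2\in\bigcap_p I_p$ forces $x_1,x_2\in I_p$, hence $y\in I_p$, for every $p$. For $\bigcup_p I_p$ the standing hypothesis $\bigcap_p I_p\ne\emptyset$ is exactly what is needed for connectedness: fix $z\in\bigcap_p I_p$, and given $x_1<y<x_2$ with $x_1\in I_{p_1}$ and $x_2\in I_{p_2}$, note that $y\le z$ puts $y$ in $[x_1,z]$, whose endpoints lie in $I_{p_1}$, so $y\in I_{p_1}$; while $y\ge z$ puts $y$ in $[z,x_2]$, whose endpoints lie in $I_{p_2}$, so $y\in I_{p_2}$. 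Hence $\bigcup_p I_p$ has no holes, and it is nonempty, bounded, and contained in $K$.

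For the type claim, let $a_p\le b_p$ be the endpoints of $I_p$ (equal exactly when $I_p$ is a singleton, which is of type LR), and, since $P$ is finite, let $a^-=\min_p a_p$, $a^+=\max_p a_p$, $b^-=\min_p b_p$, $b^+=\max_p b_p$, attained say at $p_1,p_2,p_3,p_4$. Nonemptiness of the intersection forces $a^+\le b^-$, and in fact $a^+<b^-$ whenever the common type is half-open (otherwise the intersection, an interval with endpoints $a^+,b^-$ excluding at least one of them, would be empty). I would then check two things. First, for the union: $\bigcup_p I_p\subseteq[a^-,b^+]$ trivially, with $a^-$ belonging iff $I_{p_1}$ is left-closed and $b^+$ belonging iff $I_{p_4}$ is right-closed; and $(a^-,b^+)\subseteq\bigcup_p I_p$, because any such $x$ with $x<z$ lies in the interior of $I_{p_1}$ (as $a^-<x<z\le b_{p_1}$), any such $x$ with $x>z$ lies in the interior of $I_{p_4}$ (as $a_{p_4}\le z<x<b^+$), and $x=z$ lies in every $I_p$. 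Second, for the intersection: $(a^+,b^-)\subseteq\bigcap_p I_p\subseteq[a^+,b^-]$, because $a_p\le a^+<x<b^-\le b_p$ places any such $x$ in the interior of every $I_p$, with $a^+$ belonging iff $I_{p_2}$ is left-closed and $b^-$ belonging iff $I_{p_3}$ is right-closed. Since all $I_p$ share a type, ``left-closed'' and ``right-closed'' are decided uniformly over $p$, so the union and the intersection both carry that same type. Finiteness is essential here and cannot be dropped: $\bigcup_n[0,1-1/n]=[0,1)$ and $\bigcap_n(-1/n,1)=[0,1)$ change type.

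For the last assertion, fix $n\in A$. For each $p$, since $T^n|I_p$ is a translation there is a constant $c_p$ with $T^nx-x=c_p$ for all $x\in I_p$; evaluating at a common point $z\in\bigcap_p I_p$ shows $c_p=T^nz-z$, independent of $p$. Therefore $T^nx-x=T^nz-z$ for every $x\in\bigcup_p I_p$, which we have already shown to be a subinterval of $K$, so $\bigcup_p I_p$ is a $T^n$ interval; as $n\in A$ was arbitrary it is a $T^A$ interval. I expect the only genuine friction to be the endpoint bookkeeping of the middle paragraph --- keeping the singleton case, the precise role of finiteness, and the degenerate-span exclusions straight --- which could alternatively be packaged as an induction on $|P|$ starting from the two-interval case, though the direct argument seems no longer.
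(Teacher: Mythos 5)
Your proof is correct and follows essentially the same route as the paper: fix $z\in\bigcap_p I_p$, deduce that the union has no holes via $[x_1,z]\cup[z,x_2]$, and observe that all translation constants equal $T^nz-z$. The only divergence is cosmetic: for the type claim the paper checks the two-interval case and inducts on $|P|$, whereas you carry out the endpoint bookkeeping directly with $\min$ and $\max$ of the endpoints (correctly, including the degenerate LR/singleton cases and the counterexamples showing finiteness is needed), which amounts to the same elementary computation.
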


\begin{proof} From the no holes condition it is clear that $\bigcap_p I_p$ is an interval. Fix $z \in \bigcap_p I_p$.
If $x_1, x_2 \in \bigcup_p I_p$ then $[x_1,x_2] \subset [x_1,z] \cup [x_2,z] \subset \bigcup_p I_p$ and so $\bigcup_p I_p$ is an
interval. If all of the $I_p$'s are $T$ intervals then $Tx_1 - x_1 = Tz - z = Tx_2 - x_2$ and so the points of $\bigcup_p I_p$
have a constant translation number.  The result for $T^A$ is obvious from the result for $T$.

The types result is easy to check when the cardinality of $P$ is two.  The general result follows by induction.
\end{proof}

\begin{lem}\label{lem2.2} Let $T, S$ be transformations on $K$ and $I$ be a subinterval of $K$. Any two of the following implies the third:
\begin{itemize}
\item[(i)] $I$ is a $T$ interval.
\item[(ii)] $I$ is an $S \circ T$ interval.
\item[(iii)] $T(I)$ is an $S$ interval.
\end{itemize} \end{lem}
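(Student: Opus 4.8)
The plan is to reduce each of (i)--(iii) to the assertion that a single real-valued function on $I$ is constant, and then to exploit a linear identity relating these three functions. For $x \in I$ put $f(x) = Tx - x$ and $h(x) = (S\circ T)x - x$; since $T : K \to K$ and $I \subseteq K$ we have $T(I) \subseteq K$, so for $y \in T(I)$ we may also put $g(y) = Sy - y$. The basic identity is
\[
h(x) \;=\; \big(S(Tx) - Tx\big) + \big(Tx - x\big) \;=\; g(Tx) + f(x) \qquad (x \in I).
\]
By definition, (i) says exactly that $f$ is constant on $I$, and (ii) says exactly that $h$ is constant on $I$.

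For (iii) I would first observe that $T$ maps $I$ onto $T(I)$, so $g$ is constant on $T(I)$ if and only if $g \circ T$ is constant on $I$; hence, once we know that $T(I)$ is a subinterval of $K$, condition (iii) is equivalent to the statement that $g \circ T$ is constant on $I$. With this translation in hand, the lemma is immediate from the identity $h = (g\circ T) + f$ on $I$: whenever two of the functions $f$, $g \circ T$, $h$ are constant, solving linearly shows that the third is constant too.

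The only point needing attention is the clause ``$T(I)$ is a subinterval of $K$'' used in phrasing (iii). When we derive (iii) from (i) and (ii) we have (i) available, so $T(I) = I + c$ is an interval of the same type as $I$ by the remarks preceding \eqref{eq2.1}; and when we derive (i) from (ii) and (iii), clause (iii) already presupposes that $T(I)$ is an interval, so there is nothing further to verify. (If $I$ is a singleton every clause holds trivially, so this case may be disposed of at the outset, although it is not actually needed.) I do not anticipate any genuine obstacle; the only mild subtlety is the passage between ``$g$ constant on $T(I)$'' and ``$g \circ T$ constant on $I$'' via surjectivity of $T|I$, together with the bookkeeping that guarantees $T(I)$ is known to be an interval in precisely those cases where clause (iii) is invoked.
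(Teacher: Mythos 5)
Your proof is correct and follows essentially the same route as the paper: decompose $STx - x = (STx - Tx) + (Tx - x)$ and observe that any two of the three summand/sum functions being constant forces the third to be constant. The extra care you take to check that $T(I)$ is an interval in the cases where (iii) is concluded is a sound (and slightly more scrupulous) handling of a point the paper's one-line proof leaves implicit.
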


\begin{proof} For $x \in I$, $STx - x = (STx - Tx) + (Tx - x)$.  It is thus clear that any two of the following implies the third:
\begin{itemize}
\item[(i)] $Tx - x$ is constant for all $x \in I$.
\item[(ii)] $STx - x $ is constant for all $x \in I$.
\item[(iii)] $STx - Tx$ is constant for all $x \in I$.
\end{itemize}
\end{proof}

\begin{df}\label{def2.3} For $T$ a transformation on $K$, a bounded subset of $\R$, and $A \subset \N$ the
equivalence relation $E^A_T \subset K \times K$ is defined by
\begin{equation}\label{eq2.2}
E^A_T  =  \{ (x,y)  : \text{there exists a} \ T^{A} \ \text{interval} \ I \ \text{with} \ x,y \in I \}.
\end{equation}
When $A = \N$ we will omit the superscript.
\end{df}

Since $\{x\}$ is a $T^{\N}$ interval, the relation is reflexive.  It is obviously symmetric and transitivity follows from Lemma \ref{lem2.1}.
By Lemma \ref{lem2.1} again, the equivalence class $E^A_T(x)$ is the union of all the $T^{A}$ intervals which contain $x$ and so it is the maximum
$T^A$ interval which contains $x$.

From Lemma \ref{lem2.2} it follows that $T^n(E_T(x))$ is a $T^{\N}$ interval containing $T^nx$. So for all $x \in K, n \in \N$
 \begin{equation}\label{eq2.3}
 T^n(E_T(x)) \ \subset \ E_T(T^nx).
 \end{equation}

%
%
%
%
%
%

 \begin{theo}\label{theo2.5} If for $T$  a transformation on $K$ there exists a positive $T^{\N}$ interval,
 then there exists a positive subinterval $I$ of $K$
 and a positive integer $n$ such that $T^nx = x$ for all $x \in I$. \end{theo}

 \begin{proof} This is a baby version of the Poincar\'{e} Recurrence Theorem.

 The hypothesis says that there exists $x \in X$ such that $E_T(x)$ is a positive interval and so has
 $\ell = \ell( E_T(x)) > 0$. From (\ref{eq2.3}) we have
 \begin{equation}\label{eq2.5}
 \ell \ =  \ \ell(E_T(x))\ = \ \ell(T^n(E_T(x))) \ \leq \ \ell(E_T(T^nx)).
 \end{equation}

 Since $K$ is bounded, the sequence $\{ E_T(T^nx) \}$ cannot be pairwise disjoint. So there exist $n, k \in \N$ such that
 $E_T(T^kx)$ and $E_T(T^{n+k}x)$ intersect and so are equal. By (\ref{eq2.3}) again the positive $T^{\N}$ interval $I = E_T(T^kx)$
 is mapped into itself by $T^n$. It follows from (\ref{eq2.1}) that $T^nx = x$ for all $x \in I$.
\end{proof}

\section{Interval Exchange Transformations }\label{sec3}

We review from \cite{Ke} the definition of an interval exchange transformation.
Let $J = [0,1)$ and $\tilde J = (0,1]$. With $n \geq 2$ let $[n] = \{ 1, \dots, n \}$.  Given  $\aa = (a_1, \dots , a_n)$ a
positive probability vector, i.e.
 $a_i > 0$ for $i \in [n]$ and $\sum_{j=1}^n a_j = 1$, we define
$ b_0 = 0$ and for $i \in [n]$
\begin{equation}\label{eq3.1}
b_i = \sum_{j=1}^i a_j, \qquad J_i = [b_{i-1},b_i), \qquad \tilde J_i = (b_{i-1},b_i].
\end{equation}
  Thus, $J_1, \dots, J_n$ is a partition of $J$ by Lr intervals and $\tilde J_1, \dots, \tilde J_n$ is a partition of $\tilde J$ by lR intervals.
 Let $D = \{ b_1, \dots, b_{n-1} \}$.

 If $\t$ is a permutation of $[n]$, then $\aa^{\t} = (a_{\t^{-1}(1)}, a_{\t^{-1}(2)}, \dots, a_{\t^{-1}(n)})$  is a positive probability vector and
 we form the corresponding $b^{\t}_i, J^{\t}_i$ and $\tilde J^{\t}_i$ for $i \in [n]$. Thus,
 $b^{\t}_{\t(i)} - b^{\t}_{\t(i)-1} = a^{\t}_{\t(i)} = a_i = b_i - b_{i-1}$ for
 all $i \in [n]$ and $D^{\t} =  \{ b^{\t}_1, \dots, b^{\t}_{n-1} \}$.

  We define the transformations $T$ on $J$ and $\tilde T$ on $\tilde J$
 so that for $i \in [n]$, $T : J_i \to J^{\t}_{\t(i)}$ and $\tilde T : \tilde J_i \to \tilde J^{\t}_{\t(i)}$ are the translations given  by
 \begin{equation}\label{eq3.2}
  x \mapsto  x - b_{i-1} + b^{\t}_{\t(i)-1}.
  \end{equation}

  We call $T$ the Lr $(\aa,\t)$ interval exchange transformation and $\tilde T$ the lR $(\aa,\t)$ interval exchange transformation dual to $T$.

  These possess
  the following properties.
  \begin{itemize}
  \item $T$ and $\tilde T$ are invertible and their inverses are the corresponding $(\aa^{\t},\t^{-1})$ transformations.

  \item $T$ is everywhere continuous from the
  right and $\tilde T$ is everywhere continuous from the left.

  \item $T = \tilde T$ on $[0,1] \setminus (D \cup \{0,1 \})$ and is continuous on this open set.
  \end{itemize}

  So we have for $i \in [n]$
  \begin{equation}\label{eq3.3}\begin{split}
  \tilde Tb_i \ = \ \lim_{x \uparrow b_i} Tx \ = \ b^{\t}_{\t(i)}, \hspace{.5cm}\\
  Tb_{i-1}  \ = \ \lim_{\ \ x \downarrow b_{i-1}} \tilde Tx \ = \ b^{\t}_{\t(i)- 1 }.
\end{split}\end{equation}

Notice that if $\{ x_i \}$ is a monotone increasing sequence in $(0,1)$ then it is eventually contained in some $J_i$ and so eventually
$\{ Tx_i \}$ is monotone increasing. It follows that, inductively, for all $k \in \Z$, $T^k$ is everywhere continuous from the
  right  and similarly $\tilde T^k$ is everywhere continuous from the left and both of these preserve eventually monotonicity of sequences.

If $\t(\{1,\dots,j \}) = \{1,\dots,j \}$ for some $j = 1, \dots, n-1$ then $T$ can be decomposed into
an interval exchange transformation on $[0,b_j)$ and one on $[b_j,1)$ (and similarly for $\tilde T$). Keane calls $\t$ \emph{irreducible} when
\begin{equation}\label{eq3.4}
\t(\{1,\dots,j \}) \not= \{1,\dots,j \}\ \text{ for } \ j = 1, \dots, n-1
\end{equation}

Notice that $T0 = 0$ only when $b^{\t}_1 = a^{\t}_{\t^{-1}(1)} = a_1$.  That is, when $\t(1) = 1$.  So if $\t$ is
irreducible, then $T0 \not= 0$ and so $0 \in T(D)$. Similarly, $\tilde T1 = 1$ when $\t(n) = n$ and so $\t(\{ 1, \dots, n-1 \}) = \{ 1, \dots, n-1 \}$.
Thus, irreducibility implies $\tilde T1 \not= 1$ and $1 \in \tilde T(D)$.

If for some $j = 1, \dots, n-1$ we have $b^{\t}_{\t(j)} =  b^{\t}_{\t(j+1)- 1 }$ or, equivalently, $\t(j+1) = \t(j) + 1$, then
$T$ is continuous at $b_j$.  In fact, the interval $J_j \cup J_{j+1}$ is a $T$ interval and $T$ is an interval exchange transformation
with the $n-1$ probability vector $(a_1, \dots, a_j + a_{j+1}, \dots a_n)$. We will call $\t$ \emph{split} when
\begin{equation}\label{eq3.5}
\t(j+1) \not= \t(j) + 1 \ \text{ for } \ j = 1, \dots, n-1.
\end{equation}

If $\t$ is split, then $Tb \not= \tilde Tb $ for $b \in D$ and so $D$ is the set of points at which $T$ and $\tilde T$ are not continuous.

We will call $\t$ \emph{fully split} if, in addition,
\begin{equation}\label{eq3.5a}
\t(1) \not= \t(n) + 1 \ \text{and} \ \t^{-1}(1) \not= \t^{-1}(n) + 1
\end{equation}

These conditions exclude the possibilities $T0 = \tilde T1$ and  $T^{-1}0 = \tilde T^{-1}1$, respectively.

If $\t$ is irreducible, split or fully split then $\t^{-1}$ satisfies the corresponding property.


\begin{lem}\label{lem3.1} If $x \in (0,1)$ and $k \in \N$, then
\begin{equation}\label{eq3.6}
\begin{split}
\{ x, Tx, \dots, T^{k-1}x \} \cap (\{ 0 \} \cup D) = \emptyset \ \Longleftrightarrow \hspace{1cm}\\
 \{ x, \tilde Tx, \dots, \tilde T^{k-1}x \} \cap (\{ 1 \} \cup D) = \emptyset,\hspace{1.5cm}
\end{split}\end{equation}
in which case, $T^jx = \tilde T^jx$ for $j = 1, \dots k$, and
\begin{equation}\label{eq3.6a}
T^jx \in J_i \quad \Longleftrightarrow \quad \tilde T^jx \in \tilde J_i
\end{equation}
for $j = 0,1, \dots, k-1$ and $i \in [n]$.
 \end{lem}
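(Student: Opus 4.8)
The plan is to prove the equivalence in \eqref{eq3.6} by induction on $k$, and then read off both \eqref{eq3.6a} and the equality $T^jx = \tilde T^jx$ as byproducts of the induction. The base case $k = 1$ is vacuous: both conditions assert that the empty set of points misses the relevant set, so there is nothing to check; one could also start at $k=1$ reading the condition as $\{x\} \cap (\{0\}\cup D) = \emptyset \Leftrightarrow \{x\}\cap(\{1\}\cup D)=\emptyset$, which holds because $x \in (0,1)$ so $x$ avoids both $0$ and $1$ automatically, leaving only the symmetric condition $x \notin D$.

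For the inductive step, suppose the statement holds for $k$ and that either side of the $(k+1)$-version holds; by symmetry of the argument (or by applying the $k$-version first) we get that $T^jx = \tilde T^jx$ for $j = 1,\dots,k$ and that $T^jx \notin \{0\}\cup D$, $\tilde T^jx \notin \{1\}\cup D$ for $j = 0,\dots,k-1$. The new information in passing from $k$ to $k+1$ concerns the point $T^{k}x$ (equivalently $\tilde T^{k}x$, already known equal). Here I would invoke the third bulleted property of Section~\ref{sec3}: $T = \tilde T$ on $[0,1]\setminus(D \cup \{0,1\})$. Write $y = T^{k-1}x = \tilde T^{k-1}x$. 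Since $y \notin \{0\}\cup D$ and $y \notin \{1\}\cup D$ — the first from the $T$-side condition at index $k-1$, the second from the $\tilde T$-side condition — we have $y \in (0,1)\setminus D$, hence $Ty = \tilde Ty$, i.e. $T^{k}x = \tilde T^{k}x$. Now $T^{k}x \in (0,1)$ as well (it lies in some $J_i \subset [0,1)$ and, being equal to $\tilde T^k x \in (0,1]$, it is not $0$; and it is not $1$ since it lies in $[0,1)$). So the conditions "$T^{k}x \notin \{0\}\cup D$" and "$\tilde T^{k}x \notin \{1\}\cup D$" both reduce to "$T^{k}x \notin D$", which is one and the same statement. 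This closes the induction on \eqref{eq3.6}, and along the way we have produced $T^{j}x = \tilde T^{j}x$ for $j = 1,\dots,k+1$.

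Finally, for \eqref{eq3.6a}: fix $j \in \{0,\dots,k-1\}$ and set $z = T^jx = \tilde T^jx$, which lies in $(0,1)\setminus D$ by what we have shown. The partitions satisfy $J_i \triangle \tilde J_i \subset \{b_{i-1}, b_i\} \subset D \cup \{0,1\}$, so on the set $(0,1)\setminus D$ the indicator of "lies in $J_i$" agrees with that of "lies in $\tilde J_i$"; since $z$ avoids $D\cup\{0,1\}$, we get $z \in J_i \Leftrightarrow z \in \tilde J_i$, which is exactly \eqref{eq3.6a}.

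The only mildly delicate point — the part I would be most careful about — is the bookkeeping of endpoints: one must use both the $T$-side hypothesis (which excludes $0$ and $D$) and the $\tilde T$-side hypothesis (which excludes $1$ and $D$) to place the intermediate iterates strictly inside $(0,1)$ and off of $D$, so that the coincidence $T = \tilde T$ off $D \cup \{0,1\}$ applies; neither hypothesis alone suffices, which is precisely why the lemma is stated as an equivalence of the two one-sided conditions rather than as a one-directional implication.
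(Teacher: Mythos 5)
Your proposal is correct and follows the same route as the paper, which simply states that the lemma ``is obvious for $k=1$ and then follows easily by induction''; you have filled in exactly that induction, with the key mechanism being that $T=\tilde T$ off $D\cup\{0,1\}$ and that $J_i$, $\tilde J_i$ differ only at endpoints lying in $D\cup\{0,1\}$. Two cosmetic points: the base case $k=1$ is not vacuous (the list is $\{x\}$, as your second reading correctly treats it), and the equality $T^{k+1}x=\tilde T^{k+1}x$ for the $(k+1)$-version deserves one more explicit application of the same one-line argument at the point $T^kx\in(0,1)\setminus D$, which you have all the ingredients for.
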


\begin{proof} This is obvious for $k = 1$ and then follows easily by induction.
\end{proof}

\begin{prop}\label{prop3.2} Let $k \in \N$ and let $j_p \in [n]$ for $ 0 \leq p \leq k - 1$. Define
\begin{equation}\label{eq3.7}
I = \bigcap_{p=0}^{k-1} T^{-p}(J_{i_p}), \quad \tilde I = \bigcap_{p=0}^{k-1} \tilde T^{-p}(\tilde J_{i_p}).
\end{equation}
If either $I$ or $\tilde I$ is nonempty, then for some $a < b \in [0,1]$ $I = [a,b),\tilde I = (a,b]$ and for all  $ p = 0, \dots , k$
$T^p$ is a translation on $I$ and $\tilde T^p$ is a translation on $\tilde I$ with the same translation constant. For all  $ p = 0, \dots , k$
on $(a,b)$ $T^p = \tilde T^p$  and these are continuous there. The endpoints satisfy
\begin{equation}\label{eq3.8}
a \in \{ 0 \} \cup \bigcup_{p = 0}^{k-1} T^{-p}D, \quad  b \in \{ 1 \} \cup\bigcup_{p = 0}^{k-1} \tilde T^{-p}D
\end{equation}

As we vary the sequence $\{ j_p \}\in [n]^k$, the nonempty intervals $I$ form an Lr partition of $[0,1)$, which we will call the \emph{$k$-level partition},
with the intervals $\tilde I$ the corresponding lR partition of $(0,1]$, which we will call the \emph{dual $k$-level partition} 
\end{prop}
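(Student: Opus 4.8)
The plan is to read off all the assertions directly from the defining intersections, in the following order: (a) $I$ and $\tilde I$ are intervals; (b) $T^p|I$ and $\tilde T^p|\tilde I$ are translations for $0\le p\le k$; (c) if $I\ne\emptyset$ then $I=[a,b)$ and $\tilde I=(a,b]$ with $0\le a<b\le 1$; (d) the two translation constants agree and $T^p=\tilde T^p$ on $(a,b)$; (e) the endpoint memberships (\ref{eq3.8}); (f) the partition statement. On the way we will get $I=\emptyset\iff\tilde I=\emptyset$, so (c)--(e) apply whenever either set is nonempty.

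For (a), if $x<z$ lie in $I$ and $x<y<z$, then $x,z\in J_{i_0}$ gives $y\in J_{i_0}$, and since $T|J_{i_0}$ is an increasing translation, $Tx<Ty<Tz$ with $Tx,Tz\in J_{i_1}$, so $Ty\in J_{i_1}$; iterating gives $T^py\in J_{i_p}$ for $p\le k-1$, i.e.\ $y\in I$; the same argument with $\tilde T$ handles $\tilde I$. For (b), $I\subset J_{i_0}$ makes $T|I$ a translation, and if $T^{p-1}|I$ is a translation with $p\le k$, then $T^{p-1}(I)\subset J_{i_{p-1}}$ is a $T$ interval, so Lemma \ref{lem2.2} (with $T^{p-1}$ as ``$T$'' and $T$ as ``$S$'') shows $I$ is a $T^p$ interval; $p=0$ is trivial, and dually for $\tilde I$.

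Now assume $I\ne\emptyset$ and put $a=\inf I$, $b=\sup I$; by (b), $T^p(I)=I+c_p\subset J_{i_p}$ for $0\le p\le k-1$, with $c_p$ the translation constant, so $b_{i_p-1}\le a+c_p$ and $b+c_p\le b_{i_p}$. A short induction shows that for $x_m\downarrow a$ in $I$ one has $T^px_m\downarrow T^pa\in J_{i_p}$ for all $p\le k-1$, hence $a\in I$; an analogous argument with $y_m\downarrow b$, $y_m>b$, shows (assuming $b\in I$, for contradiction) that $y_m\in I$ for large $m$, which is absurd, so $b\notin I$ and $I=[a,b)$ with $a<b$. For $x\in(a,b)$ we have $b_{i_p-1}\le a+c_p<T^px<b+c_p\le b_{i_p}$ ($p\le k-1$), so $T^px\notin\{b_0,\dots,b_{n-1}\}=\{0\}\cup D$; Lemma \ref{lem3.1} then gives $x\in\tilde I$, $T^jx=\tilde T^jx$ for $j\le k$, and $T^jx\in J_i\iff\tilde T^jx\in\tilde J_i$ for $j\le k-1$. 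In particular $(a,b)\subset\tilde I\ne\emptyset$, and the mirror of the previous arguments (left-continuity, the $\tilde J_i$, right endpoints) gives $\tilde I=(a',b']$ with $a'<b'$; since Lemma \ref{lem3.1} symmetrically yields $(a',b')\subset I=[a,b)$, necessarily $a'=a$ and $b'=b$, i.e.\ $\tilde I=(a,b]$, and also $I=\emptyset\iff\tilde I=\emptyset$.

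For (d): the inequalities above give $T^p=\tilde T^p$ on $(a,b)$ for $0\le p\le k$, each a restriction of a translation hence continuous there, and $T^px-x=\tilde T^px-x$ there is the common translation constant. For (e): if $a\ne0$, then $\{a,Ta,\dots,T^{k-1}a\}$ must meet $\{0\}\cup D$, since otherwise each $T^p$ ($p\le k-1$) is continuous at $a$ ($T$ being continuous off $D$ and $a,\dots,T^{p-1}a\notin D$) and carries a left neighborhood of $a$ into $(b_{i_p-1},b_{i_p})$, putting points just below $a$ into $I$ and contradicting $a=\inf I$. Taking the least $p$ with $T^pa\in\{0\}\cup D$: if $T^pa\in D$ then $a\in T^{-p}D$; if $T^pa=0$ then $p\ge1$, and since $T$ maps $J_{i_{p-1}}$ by a translation onto the Lr interval $J^{\t}_{\t(i_{p-1})}\subset[0,1)$ whose minimum is attained, $T^{p-1}a=\min J_{i_{p-1}}=b_{i_{p-1}-1}\in\{0\}\cup D$, contradicting minimality. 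Hence $a\in\{0\}\cup\bigcup_{p=0}^{k-1}T^{-p}D$, and the mirror argument (with the maximum $1$ of $(0,1]$ in place of $0$) gives the statement for $b$. Finally (f): distinct words in $[n]^k$ give disjoint $I$'s because the $J_i$ are disjoint, and every $x\in[0,1)$ lies in the $I$ with $T^px\in J_{i_p}$, so the nonempty $I$'s form an Lr partition of $[0,1)$ and correspondingly the $\tilde I$'s form an lR partition of $(0,1]$ with matching endpoints. The main obstacle I expect is the endpoint bookkeeping in (c)--(e): since $T$ is only right-continuous, $\tilde T$ only left-continuous, each $T^{-p}(J_{i_p})$ in general a union of intervals, and the points $0,1$ play asymmetric special roles, one must match the $I$-statement with the $\tilde I$-statement carefully, and Lemma \ref{lem3.1} is precisely the tool that makes this gluing work.
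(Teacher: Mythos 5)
Your proof is correct, but it takes a genuinely different route from the paper's. The paper argues by induction on $k$: assuming the result for $k$, it writes $I\cap T^{-k}(J_{j_k})=[a',b')$ with the new endpoints given explicitly by the recursion (\ref{eq3.9}), builds $\tilde I\cap\tilde T^{-k}(\tilde J_{j_k})=(a',b']$ in lockstep using the shared translation constant, and reads the endpoint memberships (\ref{eq3.8}) directly off (\ref{eq3.9}). You instead work with the full intersection at once: the interval and translation structure comes from the abstract Lemmas \ref{lem2.1} and \ref{lem2.2}, the half-open form $[a,b)$ comes from an $\inf/\sup$ argument exploiting one-sided continuity, the identification $\tilde I=(a,b]$ comes from gluing along the open core via Lemma \ref{lem3.1}, and the endpoint memberships come from a continuity/minimal-hitting-time argument (including the necessary exclusion of the value $0$ for $a$, resp.\ $1$ for $b$, which you handle correctly by pulling back the minimum of $J^{\t}_{\t(i_{p-1})}$). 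The paper's version is shorter and its recursion (\ref{eq3.9}) is cited again later (in the proof of Lemma \ref{lem3.3}, to identify left endpoints of level-$k$ intervals), so it earns its keep beyond this proposition; your version makes explicit the conceptual content -- that $a$ is the first backward preimage of $D\cup\{0\}$ along the orbit, that $I$ and $\tilde I$ can only differ on the orbit of $D$, and that Lemma \ref{lem3.1} is exactly the gluing device -- at the cost of more endpoint bookkeeping.
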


 \begin{proof}  Again the result is clear for $k = 1$.  Assuming the result for $k$ we prove it for $k+1$.

 By induction hypothesis $T^k$ is a translation on $I = [a,b)$. If $T^k(I)$ meets $J_{j_k} = [b_{j_k -1},b_{j_k})$ then
 $I \cap T^{-k}(J_{j_k}) = [a',b')$ with
 \begin{equation}\label{eq3.9}
   a' = \begin{cases} a \ \text{if} \ T^ka > b_{j_k -1},\\   T^{-k}b_{j_k -1} \ \text{otherwise} \end{cases}
   \quad b' = \begin{cases} b \ \text{if} \ T^kb < b_{j_k},\\   T^{-k}b_{j_k} \ \text{otherwise} \end{cases}
\end{equation}
  and on it $T \circ T^k$
 is a translation. By induction hypothesis, $\tilde T^k$ has the same translation constant on $(a,b]$ as $T^k$ does on $[a,b)$. So
 $\tilde I \cap \tilde T^{-k}(\tilde J_{j_k}) = (a',b']$ and on it $\tilde T \circ \tilde T^k$ has the same translation constant as
 $T \circ T^k$ does on $[a',b')$. Finally, $T \circ T^k = \tilde T \circ \tilde T^k$ on $(a',b')$.

The endpoint result (\ref{eq3.8}) follows inductively from (\ref{eq3.9}).

Distinct sequences yield disjoint intervals $I$ and every point of $(0,1]$ lies is some $I$. Hence, they form an Lr partition.
 \end{proof}

 \begin{cor}\label{cor3.2a} If the Lr $(\aa,\t)$ interval exchange transformation $T$ or its dual $\tilde T$ has a periodic point of period $k$,
 then there is a positive subinterval of $(0,1)$ on which $T^k = \tilde T^k = id$. \end{cor}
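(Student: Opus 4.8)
The plan is to read the statement straight off Proposition~\ref{prop3.2}. The observation is that a periodic point of period $k$ lies in one of the intervals of the $k$-level partition, each of which is a genuinely positive interval, and on such an interval $T^k$ is a translation; a translation possessing a fixed point is the identity. No additional hypotheses (irreducibility, splitness) are needed.

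Concretely, suppose first that $T$ has a periodic point $x$ with $T^kx = x$, so $x \in [0,1)$. For $0 \le p \le k-1$ let $i_p \in [n]$ be the index with $T^px \in J_{i_p}$; this is well defined since $\{J_1,\dots,J_n\}$ partitions $[0,1)$ and $T^px \in [0,1)$. Then $x$ lies in the interval $I = \bigcap_{p=0}^{k-1} T^{-p}(J_{i_p})$ of the $k$-level partition, so $I \neq \emptyset$. By Proposition~\ref{prop3.2}, $I = [a,b)$ for some $a < b$ in $[0,1]$, the map $T^k$ is a translation on $I$, and $T^k = \tilde T^k$ on $(a,b)$.

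Since $x \in I$ and $T^kx = x$, the constant translation number of $T^k$ on $I$ equals $T^kx - x = 0$, hence $T^ky = y$ for every $y \in I$, in particular on $(a,b)$. By Proposition~\ref{prop3.2} again, $\tilde T^k = T^k = id$ on $(a,b)$. Finally $(a,b)$ is positive because $a < b$, and $(a,b) \subset (0,1)$ because $0 \le a$ and $b \le 1$; this is the required subinterval.

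If instead the dual map $\tilde T$ has a periodic point of period $k$, the argument is the mirror image, now using the dual $k$-level partition: the point lies in some $\tilde I = (a,b]$, Proposition~\ref{prop3.2} makes $\tilde T^k$ a translation on $\tilde I$ with a fixed point, hence the identity on $\tilde I \supseteq (a,b)$, and then $T^k = \tilde T^k = id$ on $(a,b)$. There is no serious obstacle here; the one point to be careful about is that the periodic orbit of $x$ may meet points of discontinuity of $T$ (or endpoints of level intervals), but that is exactly what the $k$-level partition is built to absorb — choosing $I$ via the itinerary $i_0,\dots,i_{k-1}$ of $x$ automatically selects the branch along which $T^k$ is a single translation.
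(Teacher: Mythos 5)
Your proof is correct and follows essentially the same route as the paper: locate the periodic point in its $k$-level partition interval $I=[a,b)$ (or $\tilde I=(a,b]$ in the dual case), use Proposition~\ref{prop3.2} to see that $T^k$ is a translation there with translation constant $T^kx-x=0$, and conclude that $T^k=\tilde T^k=id$ on $(a,b)$. The extra care you take in spelling out the itinerary $i_0,\dots,i_{k-1}$ is harmless and matches the paper's implicit choice of the partition element containing $x$.
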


\begin{proof} Assume that $T^kx = x$ for some $x \in [0,1)$. If $I = [a,b)$ is the element of the $k$ level partition which contains $x$ then since $T^k$ is
a translation on $I$ and so $T^ky - y = T^kx - x = 0$ for $y \in [a,b)$. Since $\tilde T^k = T^k$ on $(a,b)$ we see that
$\tilde T^k = T^k = id$ on $(a,b)$. Similarly, $\tilde T^kx = x$ for $x \in \tilde I = (a,b]$ implies $\tilde T^k = T^k = id$ on $(a,b)$.
\end{proof}

\begin{df} We say that the interval exchange transformation $T$ satisfies the \emph{Keane Condition} when for all $k \in \N$ $D \cap T^k(D) = \emptyset$.
\end{df}

Of course, $D \cap T^k(D) = \emptyset$ if and only if $T^{-k}(D) \cap D = \emptyset$.
The Keane Condition says exactly that the $T$ orbits of the points of $D$
are infinite and distinct.

\begin{lem}\label{lem3.3} $D \cap T^k(D) = \emptyset$ for all $k \in \N$ if and only if
$D \cap \tilde T^k(D) = \emptyset$ for all $k \in \N$. \end{lem}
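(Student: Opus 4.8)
The plan is to express both Keane Conditions in terms of a single combinatorial invariant of the refinement partitions furnished by Proposition~\ref{prop3.2}. For $k\in\N$ let $Z_k\subseteq(0,1)$ denote the set of interior breakpoints of the $k$-level partition. By Proposition~\ref{prop3.2} the $k$-level partition and the dual $k$-level partition have the same endpoints, so $Z_k$ is also the set of interior breakpoints of the dual $k$-level partition; in particular the property
\[(\star)\qquad \#Z_k=k(n-1)\quad\text{for every }k\in\N\]
does not depend on whether we view it through $T$ or through $\tilde T$. I will show that the Keane Condition for $T$ is equivalent to $(\star)$; running the same argument with $\tilde T$ and the point $1$ in place of $T$ and the point $0$ gives the equivalence for $\tilde T$, and the Lemma follows.

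First, identify $Z_k$ with preimages of $D$. By \eqref{eq3.8} each interior breakpoint of the $k$-level partition is a left endpoint of some cell, hence lies in $\bigcup_{p=0}^{k-1}T^{-p}(D)$; conversely, if $z\neq0$ and $T^{p}z\in D$ for some $p\le k-1$, then the description of the cells in Proposition~\ref{prop3.2} shows $z$ is a left endpoint of a $k$-level cell, so $z\in Z_k$. Thus $Z_k=\bigl(\bigcup_{p=0}^{k-1}T^{-p}(D)\bigr)\setminus\{0\}$, and dually $Z_k=\bigl(\bigcup_{p=0}^{k-1}\tilde T^{-p}(D)\bigr)\setminus\{1\}$. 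Since $T$ is invertible and $\#D=n-1$, each $T^{-p}(D)$ has $n-1$ elements, so $\#\bigcup_{p=0}^{k-1}T^{-p}(D)\le k(n-1)$ with equality exactly when the sets $T^{-p}(D)$, $0\le p\le k-1$, are pairwise disjoint, i.e. when $D\cap T^{-m}(D)=\emptyset$ for $1\le m\le k-1$. As $D\cap T^k(D)=\emptyset\iff T^{-k}(D)\cap D=\emptyset$, the Keane Condition for $T$ is equivalent to $\#\bigcup_{p=0}^{k-1}T^{-p}(D)=k(n-1)$ for all $k$.

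It remains to pass between $\#\bigcup_{p=0}^{k-1}T^{-p}(D)$ and $\#Z_k$, which differ only according to whether $0\in\bigcup_{p=0}^{k-1}T^{-p}(D)$. The key observation is that if $T$ satisfies the Keane Condition then $T^{p}0\notin D$ for all $p\ge1$: if $T0=0$ this is clear, and if $T0\neq0$ then the $T$-preimage $c$ of $0$ is the left endpoint $b_{\t^{-1}(1)-1}$ of the domain partition (by \eqref{eq3.2}), which is nonzero precisely because $\t(1)\neq 1$, so $c\in D$; then $T^{p}0\in D$ would give $T^{p+1}c\in D$ with $c\in D$, contradicting the Keane Condition. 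Hence under the Keane Condition $0\notin\bigcup_{p=0}^{k-1}T^{-p}(D)$, so $Z_k=\bigcup_{p=0}^{k-1}T^{-p}(D)$ and $\#Z_k=k(n-1)$, i.e. $(\star)$ holds. Conversely $(\star)$ implies the Keane Condition for $T$: from $Z_k\subseteq\bigcup_{p=0}^{k-1}T^{-p}(D)$ we get $k(n-1)=\#Z_k\le\#\bigcup_{p=0}^{k-1}T^{-p}(D)\le k(n-1)$, forcing the $T^{-p}(D)$ to be pairwise disjoint. The same two implications with $\tilde T$ and $1$ show the Keane Condition for $\tilde T$ is also equivalent to $(\star)$, completing the proof.

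The step I expect to be the main obstacle is exactly this bookkeeping at the two ``missing'' endpoints $0$ and $1$: the interior breakpoints of the $k$-level partition are labelled by $\bigcup_{p}T^{-p}(D)$ only after deleting $0$ and by $\bigcup_{p}\tilde T^{-p}(D)$ only after deleting $1$, so a priori these two unions could have different cardinalities, and it is the fact that the forward $T$-orbit of $0$ misses $D$ once the Keane Condition holds (dually for $\tilde T$ and $1$) that forces the counts to agree. Verifying the inclusion $\bigl(\bigcup_{p=0}^{k-1}T^{-p}(D)\bigr)\setminus\{0\}\subseteq Z_k$ carefully from Proposition~\ref{prop3.2} is the other small technical point, but it is routine given the structure of the cells described there.
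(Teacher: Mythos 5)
Your proof is correct, and it takes a genuinely different route from the paper's. The paper argues locally: given a single coincidence $T^k a_1 = b_{i-1}$ with $a_1, b_{i-1} \in D$, it takes the $k$-level cell with left endpoint $a_1$, passes to the corresponding dual cell $(c,d]$, and reads off from \eqref{eq3.8} that $d \in \{1\} \cup \bigcup_{p} \tilde T^{-p}(D)$, thereby producing a coincidence $D \cap \tilde T^{j}(D) \neq \emptyset$ for some $1 \leq j \leq k+1$ (the case $d = 1$ being absorbed via $1 \in \tilde T(D)$ when $1$ is not fixed). You instead argue globally, showing that each Keane Condition is equivalent to the symmetric quantitative statement that the $k$-level partition has exactly $k(n-1)$ interior breakpoints for every $k$. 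Your route requires two verifications the paper's does not: the reverse inclusion $\bigl(\bigcup_{p=0}^{k-1} T^{-p}(D)\bigr) \setminus \{0\} \subseteq Z_k$, which does follow from \eqref{eq3.9} (if $T^{p}z = b_{i-1} \in D$ then, since $T^{p}$ translates the $p$-level cell containing $z$, the point $z$ is forced to be the left endpoint of its $(p+1)$-level cell, hence of its $k$-level cell for $k \geq p+1$), and the observation that under the Keane Condition the forward $T$-orbit of $0$ (dually, the forward $\tilde T$-orbit of $1$) avoids $D$, which is what reconciles deleting $0$ on one side with deleting $1$ on the other; your argument for the latter via $T^{-1}0 = b_{\t^{-1}(1)-1} \in D$ is correct. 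In exchange you avoid the paper's index bookkeeping (the exponent drifting from $k$ to some $j \leq k+1$) and obtain a clean reformulation of the Keane Condition as maximality of the breakpoint count. Both arguments ultimately rest on the same structural input, namely Proposition \ref{prop3.2} and \eqref{eq3.8}, and neither needs irreducibility.
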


\begin{proof} Assume that $a_1 \in D$ and $T^ka_1 = b_{i-1} \in D$ so that $1 < i < n$. Let $I$ be an interval  of the $k$-level Lr interval described in Proposition \ref{prop3.2}
 with
$a_1 \in I$. From equation (\ref{eq3.9}) $I$ has $a_1$ as left end
point. It is translated by $T^k$ to an interval with $b_{i-1}$ as left endpoint. So  some  interval $\tilde K = (c,d] $ of the dual $k$-level
lR partition
is translated by $\tilde T^k$ so that $\tilde T^kd = b_{i-1}$. By (\ref{eq3.8}) $d \in \{ 1 \} \cup\bigcup_{p = 0}^{k-1} \tilde T^{-p}D$.
That is, $\tilde T^{k-p}a_2 = b_{i-1}$ for some $a_2 \in D$ and some $p < k$ or else $\tilde T^k1 = b_{i-1}$. In the latter case, $1$ is not fixed and
so $1 \in \tilde T(D)$ and $b_{i-1} \in \tilde T^{k+1}(D)$. Thus, $D$ meets $\tilde T^j(D)$ for some $j$ with $1 \leq j \leq k+1$.

Similarly, $D \cap \tilde T^k(D) \not= \emptyset $ implies $D \cap T^j(D) \not= \emptyset$ for some $j \in \N$.
\end{proof}

The major application of this condition is Keane's Theorem:

\begin{theo}\label{theo3.4} If the Lr $(\aa,\t)$ interval exchange transformation $T$ is  irreducible and satisfies the Keane Condition, then
the maps $T$ and $\tilde T$ are minimal. That is, for every $x \in [0,1)$ the orbit $\{ T^kx : k \in \Z \}$ is dense in $[0,1)$ and
for every $x \in (0,1]$ the orbit $\{ \tilde T^kx : k \in \Z \}$ is dense in $(0,1]$. \end{theo}

\begin{proof} In \cite{Ke} Keane proves this for $T$. The results for $\tilde T$ follow because the $\tilde T$ lR interval exchange transformation
is conjugate to an Lr interval exchange transformation via the map $x \mapsto 1 - x$. From Lemma \ref{lem3.3} it follows that the
conjugate also satisfies the Keane Condition.

Irreducibility says that no interval $[0,b_i)$ is invariant for $T$ and no $(0,b_i]$ is invariant for $\tilde T$.  Thus, the
conjugate Lr transformation is irreducible and Keane's Theorem applies to it. Minimality is preserved by conjugation.
\end{proof}

For constructing examples the following Irrationality Theorem from \cite{Ke} is useful.

\begin{theo}\label{theo3.5} If $\t$ is irreducible and $\{ a_1, \dots, a_{n-1}, a_n \}$ is linearly independent over the field of rationals $\Q$
(or, equivalently,
 $\{ a_1, \dots, a_{n-1}, 1 \}$ is linearly independent over $\Q$) then the $(\aa,\t)$ interval exchange transformation satisfies the
 Keane Condition. \end{theo}

\section{The Associated Subshift and Its Asymptotic Pairs}\label{sec4}

Let $T, \tilde T$ be the interval exchange transformations associated with $(\aa, \t)$ as described in the previous section.

We define the \emph{itinerary functions} $\I : [0,1) \to \Omega, \tilde \I : (0,1] \to \Omega$ by
\begin{equation}\label{eq4.2a}
\begin{split}
 \I(x)_k = i \ \Longleftrightarrow \ T^kx \in J_i \ \ \text{for} \ k \in \Z, i \in [n], x \in [0,1), \\
  \tilde \I(x)_k = i \ \Longleftrightarrow \ \tilde T^kx \in \tilde J_i \ \ \text{for} \ k \in \Z, i \in [n], x \in (0,1].
 \end{split}
 \end{equation}

Clearly we have the following commutative diagrams showing that $\I$ maps $T$ and $\tilde \I$ maps $\tilde T$ to the shift $S$ on $\Omega$.

\[
\begin{array}{cc}
\xymatrix{
\Omega  \ar[r]^{S} & \Omega  \\
[0,1) \ar[u]^{\I} \ar[r]_T & [0,1) \ar[u]_{\I}
}
\,\,\,\,\,\,&\,\,\,\,\,\,
\xymatrix{
\Omega  \ar[r]^{S} & \Omega  \\
(0,1] \ar[u]^{\tilde \I} \ar[r]_{\tilde T} & (0,1] \ar[u]_{\tilde \I}
}
\end{array}\]

Define
\begin{equation}\label{eq4.1}
D_{\infty} \ = \ \{0, 1 \} \cup \bigcup_{k \in \Z} T^k(D)\ = \ \{0, 1 \} \cup \bigcup_{k \in \Z} \tilde T^k(D).
\end{equation}

Notice that either $0 \in T(D)$ or else $T0 = 0$. Similarly, $1 \in \tilde T(D)$ or else $\tilde T1 = 1$. Also,
$D^{\t} \subset D_{\infty}$.

It follows that $R = [0,1] \setminus D_{\infty}$ is a dense $G_{\delta}$ subset of $[0,1]$ on which
 $T = \tilde T$ and the set is invariant, i.e. $T(R) = \tilde T(R) = R$. Hence, $T^{-1} = \tilde T^{-1}$ on $R$ as well.
   The points of $R$ are called the \emph{regular points} for the transformation.
Since $R$ is disjoint from $D \cup D^{\t}$, it follows that $T$ and its inverse are continuous at the points of  $R$.

\begin{prop}\label{prop4.1} The map $\I$ is everywhere continuous from the
  right and $\tilde \I$ is everywhere continuous from the left. On $R$ the maps $\I$ and $\tilde \I$ are equal and are continuous. \end{prop}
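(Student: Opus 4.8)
The plan is to read off the continuity properties of $\I$ and $\tilde\I$ directly from the corresponding properties of the iterates of $T$ and $\tilde T$, which were recorded in Section \ref{sec3}. Recall that $\Omega = [n]^{\Z}$ carries the product topology, so a sequence $\I(x^{(m)})$ converges to $\I(x)$ precisely when, for each fixed $k \in \Z$, eventually $\I(x^{(m)})_k = \I(x)_k$, i.e.\ eventually $T^k x^{(m)}$ and $T^k x$ lie in the same interval $J_i$. Thus it suffices to prove continuity of $\I$ one coordinate at a time.

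First I would treat one-sided continuity of $\I$. Fix $x \in [0,1)$ and $k \in \Z$, and let $i = \I(x)_k$, so $T^k x \in J_i = [b_{i-1}, b_i)$. If $x_m \downarrow x$, then by the observation in Section \ref{sec3} that $T^k$ is everywhere continuous from the right and preserves eventual monotonicity, the sequence $T^k x_m$ is eventually monotone decreasing and converges to $T^k x$ from the right; since $J_i$ is an Lr interval (left-closed, right-open) containing $T^k x$, a right-approaching sequence to $T^k x$ is eventually in $J_i$, hence $\I(x_m)_k = i$ eventually. As $k$ was arbitrary, $\I(x_m) \to \I(x)$, giving right-continuity of $\I$. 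The symmetric argument using the left-continuity of $\tilde T^k$ and the fact that each $\tilde J_i$ is an lR interval (right-closed) gives left-continuity of $\tilde\I$; alternatively one can invoke the conjugacy $x \mapsto 1-x$ as in the proof of Theorem \ref{theo3.4}.

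Next I would handle the behaviour on the regular set $R = [0,1]\setminus D_\infty$. Since $R$ is disjoint from $D$ (hence from $\{0,1\}\cup D$) and $T = \tilde T$ on $R$ with $T(R) = R$, Lemma \ref{lem3.1} (or a direct check) gives $T^k x = \tilde T^k x$ for all $k \in \Z$ and all $x \in R$, and moreover $T^k x \notin \{0\}\cup D$; in particular $T^k x$ lies in the interior of whichever $J_i$ it belongs to, and that interior equals the interior of the corresponding $\tilde J_i$. Hence $\I(x) = \tilde\I(x)$ for $x \in R$, proving the first assertion of the last sentence. For continuity on $R$: fix $x \in R$ and $k \in \Z$; since $T^k x$ lies in the open interval $(b_{i-1}, b_i)$ for $i = \I(x)_k$, and $T^k$ is continuous at the regular point $x$ (Section \ref{sec4}: $T$ and $T^{-1}$ are continuous on $R$, so every $T^k$ is), there is a two-sided neighbourhood $U$ of $x$ with $T^k(U) \subset (b_{i-1},b_i) \subset J_i$, so $\I$ is constant in the $k$-th coordinate on $U$. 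Intersecting finitely many such neighbourhoods for the finitely many coordinates needed to pin down a basic open set of $\Omega$ shows $\I$ is continuous at $x$; the same neighbourhood works for $\tilde\I$ since $\tilde\I = \I$ near $x$ (as $R$ is a neighbourhood basis is not needed — we only need continuity, and the shifted sets $T^k x$ stay in open intervals).

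The only mild subtlety — the main obstacle, such as it is — is making precise why $T^k x_m$ approaches $T^k x$ \emph{from the right} when $x_m \downarrow x$, and hence lands in the half-open interval $J_i$ even when $T^k x = b_{i-1}$ is the left endpoint: this is exactly the content of the assertion in Section \ref{sec3} that $T^k$ is everywhere continuous from the right and preserves eventual monotonicity of sequences, which was proved there by induction on $k$ (and for negative $k$ using that the inverse of a right-continuous interval exchange is again one). Once that input is granted, the rest is a routine unwinding of the product topology on $\Omega$.
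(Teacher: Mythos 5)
Your proposal is correct and follows essentially the same route as the paper: coordinate-wise continuity in the product topology on $\Omega$, deduced from the right-continuity of $T^k$ and its preservation of eventual monotonicity together with the fact that each $J_i$ is an Lr interval, and, on $R$, from the orbit avoiding $\{0,1\}\cup D$ so that each $T^kx=\tilde T^kx$ lies in an open interval $(b_{i-1},b_i)$ where $T^k$ is (two-sidedly) continuous. One trivial slip: the phrase ``$\tilde\I=\I$ near $x$'' is not literally true (they agree only on $R$), but your fallback argument --- that $\tilde T^k x$ also lies in the open interval $(b_{i-1},b_i)\subset\tilde J_i$ --- is the correct justification and suffices.
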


  \begin{proof} On each interval $J_i = [b_{i-1},b_i)$ the function $\I(\cdot)_0$ is constantly $i$ as is $\tilde \I(\cdot)_0$ on $\tilde J_i$.
  So $\I(\cdot)_0$ is continuous from the right and $\tilde \I(\cdot)_0$  is continuous from the left. They agree and are continuous
  on the open interval $(b_{i-1},b_i)$.

  For every $k \in \Z$ $\I(T^kx)_0 = (S^k\I(x))_0 = \I(x)_k$.  That is, $\I(\cdot)_k = \I(\cdot)_0 \circ T^k$. Since $T^k$ and $\I(\cdot)_0$
  are continuous from the right and $T^k$ preserves eventually monotonicity,
   it follows from the definition of the product topology on $\Omega$ that $\I$ is continuous from the right.
  Similarly, $\tilde \I$ is continuous from the left. Furthermore, since $R$ is invariant, it follows that $\I = \tilde \I$ on $R$ and so at points
  of $R$ the map is continuous from both directions.
  \end{proof}

  Define
\begin{equation}\label{eq4.2}
X_0  =  \I(R) = \tilde \I(R) \quad \text{and} \quad X = \overline{X_0}.
\end{equation}

The set $X_0$ is an $S$ invariant subset of $\Omega$, i.e. $S(X_0) = X_0$, and so the closure $X$ is a closed, invariant subset. That is,
$(X,S)$ is a subshift of the full shift $(\Omega,S)$ on $n$ symbols.

Keane provides an explicit description of $X$ as follows. Let $\J = ([0,1) \times \{ 0 \}) \cup ((D_{\infty} \setminus \{ 0 \}) \times \{-1 \}) $.
Order $\J$ lexicographically and use the order topology. Since every nonempty subset has a supremum and infimum, it follows that
$\J$ is a compact Hausdorff space. Identify each $x \in [0,1)$ with $(x,0)$ in $\J$ and for $x \in D_{\infty} \setminus \{ 0 \}$ write
$x-$ for $(x,-1)$. Notice that each $x-,x$ with $x \in D_{\infty} \setminus \{ 0,1 \}$ is a gap pair, i.e. $x- < x$ and there are no points
between them. If $D_{\infty}$ is dense, as in the minimal case, then $\J$ is homeomorphic to the Cantor Set. The projection to the first
coordinate defines a continuous, order-preserving surjection from $\J$ onto $[0,1]$  which is one-to-one over the points of $R \cup \{ 0,1 \}$
and which maps $x$ and $x-$ to $x$.

Let $\J_i = [b_{i-1},b_i-]$ for $i \in [n]$. This is a partition of $\J$ by $n$ clopen intervals. Define the homeomorphism $\T$ on $\J$
by mapping $\J_i$ to the clopen interval $\J^{\t}_{\t(i)}$ in the order-preserving way which extends $T$ on $J_i$.
Now the itinerary map is a continuous map from $\J$ onto $X$, mapping $\T$ to $S$ and extending $\I$ on the dense set $R$.

Our alternative description of $X$ is the following

\begin{theo}\label{theo4.2} $X$ equals the union $ \I([0,1)) \cup \tilde \I((0,1])$.\end{theo}

\begin{proof} Any $x \in [0,1)$ is a limit of a decreasing sequence $\{ x^i \}$ in $R$. So $\I(x) = \lim \I(x^i)$ is in the closure of $X_0$.
Similarly, $\tilde \I((0,1]) \subset X$.

Now suppose $\{ \a^i = \I(x^i) = \tilde \I(x^i) \}$ is a sequence in $X_0$ which converges to $\b \in \Omega$. By going to a subsequence we can assume that
$\{ x^i \}$ converges monotonically to a point $y \in [0,1]$. If the sequence is decreasing, then $\b = \I(y)$.  If the sequence is increasing, then
$\b = \tilde \I(y)$.
\end{proof}

\begin{cor}\label{cor4.3} If the Lr $(\aa,\t)$ interval exchange transformation $T$ is  irreducible and satisfies the Keane Condition, then
$(X,S)$ is a minimal subshift.\end{cor}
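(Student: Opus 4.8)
The plan is to combine Keane's Theorem (Theorem \ref{theo3.4}) with the new description of $X$ given in Theorem \ref{theo4.2}. By Theorem \ref{theo3.4}, irreducibility plus the Keane Condition guarantee that $T$ is minimal on $[0,1)$ and $\tilde T$ is minimal on $(0,1]$. So the strategy is: take an arbitrary point $\alpha \in X$, show its $S$-orbit is dense in $X$, and invoke the fact that $X = \I([0,1)) \cup \tilde \I((0,1])$ to reduce everything to density statements about $T$- and $\tilde T$-orbits of points in the interval.

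First I would record that $\I$ intertwines $T$ with $S$ and $\tilde\I$ intertwines $\tilde T$ with $S$ (the commutative diagrams in the text), so that $\I$ carries $T$-orbits to $S$-orbits and likewise for $\tilde\I$. Next, given $\alpha \in X$, Theorem \ref{theo4.2} writes $\alpha = \I(x)$ for some $x \in [0,1)$ or $\alpha = \tilde\I(x)$ for some $x \in (0,1]$; by symmetry treat the first case. Minimality of $T$ says the $T$-orbit of $x$ is dense in $[0,1)$, hence $\I$ of that orbit, i.e. the $S$-orbit of $\alpha$, is dense in $\I([0,1))$. The remaining point is that $\I([0,1))$ is itself dense in $X$: since $R$ is dense in $[0,1]$ and every $y \in [0,1)$ is a decreasing limit of points of $R$ with $\I(y) = \lim \I(y^i)$ (right-continuity of $\I$, Proposition \ref{prop4.1}), we get $\overline{X_0} \subseteq \overline{\I([0,1))}$, so $X = \overline{\I([0,1))}$. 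Combining, the $S$-orbit of $\alpha$ is dense in $X$.

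The main obstacle, though a mild one, is the asymmetry between the two halves of the decomposition: the orbit of $\alpha = \I(x)$ is a priori only dense in $\I([0,1))$, not obviously in all of $X$, and similarly the $\tilde\I$ side only gives density in $\tilde\I((0,1])$. This is handled exactly by the observation in the preceding paragraph that each of $\I([0,1))$ and $\tilde\I((0,1])$ is already dense in $X$ (both contain $X_0 = \I(R)$ in their closure, or more directly each contains $X_0$ since $\I = \tilde\I$ on $R$), so density of an orbit in either piece upgrades to density in $X$. With that, minimality of $(X,S)$ follows, and since $(X,S)$ is a subshift it is automatically a compact metric system, so "minimal subshift" is established. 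I would also remark that Keane's Theorem gives two-sided density of orbits (orbits under $\{T^k : k \in \Z\}$), which is exactly the form of minimality needed for the subshift, there being no extra work to pass between forward and full orbits here.
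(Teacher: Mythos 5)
Your proof is correct and follows essentially the same route as the paper: Keane's Theorem for minimality of $T$ and $\tilde T$, Theorem \ref{theo4.2} to write each point of $X$ as $\I(x)$ or $\tilde \I(x)$, and the continuity properties of Proposition \ref{prop4.1} to upgrade density of orbits in the interval to density of $S$-orbits in $X$. The one step worth making explicit is that ``the $T$-orbit of $x$ is dense, hence its $\I$-image is dense in $\I([0,1))$'' itself relies on the right-continuity of $\I$ (or, as the paper does, just on continuity at points of $R$ to get $X_0$ into the orbit closure), not merely on the general fact that images of dense sets are dense, which fails for discontinuous maps.
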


\begin{proof} By Keane's Theorem \ref{theo3.4} the closure of every $T$ orbit and every $\tilde T$ orbit contains $R$. By Theorem \ref{theo4.2}
and continuity of $\I = \tilde \I$ at points of $R$, it follows that the closure of every $S$ orbit in $X$ contains $X_0$ and so equals $X$.
\end{proof}

\begin{theo}\label{theo4.4} Assume that the Lr $(\aa,\t)$ interval exchange transformation $T$ has no periodic points.
\begin{itemize}
\item[(i)] If $x, y$ are distinct points of $[0,1)$, then
the pair $\I(x), \I(y)$  is neither positively, nor negatively asymptotic.
\item[(ii)] If $x, y$ are distinct points of $(0,1]$ then
the pair $\tilde \I(x), \tilde \I(y)$  is neither positively, nor negatively asymptotic.
\item[(iii)] If $\a, \b$ are distinct points of $X$ and $\a \in X_0$, then the pair is neither
positively nor negatively asymptotic.
\item[(iv)] The maps $\I$ and $\tilde \I$ are injective.
\end{itemize}
\end{theo}

\begin{proof} (i) Assume that $i_k = \I(x)_k = \I(y)_k$ for all $k \geq 0$. By Proposition \ref{prop3.2}
$\{ I_k =  \bigcap_{p=0}^{k-1} T^{-p}(J_{i_p})  \}$ is a decreasing sequence of intervals each of which contains $x$ and $y$.
For $k > p$ it is a $T^p$ interval.
By Lemma \ref{lem2.1} the intersection $I = \bigcap_{k > 0} I_k = \bigcap_{k > p} I_k $ is a $T^p$ interval for all $p$. Since it contains
$x$ and $y$, it is a positive $T^{\N}$ interval if $x \not= y$. Theorem \ref{theo2.5} would then imply the there is an interval on which
$T$ is periodic.  Since $T$ is minimal, it does not admit periodic points and so $x = y$.

Now if $\I(x)_k = \I(y)_k$ for $k \geq N$, then $\I(T^Nx)_k = \I(T^Ny)_k$ for $k \geq 0$ and so $T^Nx = T^Ny$.  Because $T$ is injective.
it follows that $x = y$.

Apply the result to $T^{-1}$ to see that the pair is negatively asymptotic only when $x = y$.

(ii) As in (i) $\tilde \I(x)_k = \tilde \I(y)_k$ for $k \geq N$ with $x \not= y$ implies that there is a positive interval on which $\tilde T$ is periodic.
Such an interval meets $R$ and the intersection would consist of periodic points for $T$. The contradiction shows that $x = y$.

(iii) Assume that $\a = \I(x) = \tilde \I(x)$ for some $x \in R$. If $\b = \I(y)$, then $x \not= y$ and so $\a = \I(x)$ and $\b = \I(y)$
are not asymptotic. Similarly, if $\b = \tilde \I(y)$, then $x \not= y$ and so $\a = \tilde \I(x)$ and $\b = \tilde \I(y)$ are not asymptotic.
By Theorem \ref{theo4.2} one of these two cases applies.

(iv) If for $x \not= y$ in $[0,1)$, $\I(x) = \I(y)$ then the ``pair'' $\I(x),\I(y)$ is asymptotic contradicting (i).  Similarly, for $\tilde \I$.
\end{proof}

The result (i) above is equivalent to a comment at the end of Section 5 of \cite{Ke}.

The main result of this paper is the description of asymptotic pairs.

\begin{theo}\label{theo4.5} Assume that the  Lr $(\aa,\t)$ interval exchange transformation $T$ is irreducible and satisfies the
Keane Condition. Let $\a, \b \in X$.
\begin{itemize}
\item[(i)] If $\a \not= \b$ then the pair  $\{ \a, \b  \}$ is positively asymptotic if and only if there exist $i \in [n]$ and $k \in \Z$ such that
with $x =  b^{\t}_i \in D^{\t}$, $\{ \a, \b \} = \{ S^k \I(x), S^k \tilde \I(x) \}$.
\item[(ii)] If $\a \not= \b$ then the pair $\{ \a, \b  \}$ is negatively asymptotic if and only if there exist $i \in [n]$ and $k \in \Z$ such that
with $x =  b_i \in D$, $\{ \a, \b \} = \{ S^k \I(x), S^k \tilde \I(x) \}$.
\item[(iii)] If $\I(x) = \tilde \I(y)$ for some $x \in [0,1), y \in (0,1]$, then $x = y$ and the common point lies in $R$.
\item[(iv)] Assume, in addition, that $\t$ is fully split. If the pair $\{ \a, \b  \}$ is doubly asymptotic, then $\a = \b$. Conversely, if $\t$ is not
split, then there exist doubly asymptotic pairs of distinct points.
\end{itemize}
\end{theo}

\begin{proof} By the Keane Condition every $T$ orbit or $\tilde T$ of a point of $D$ meets $D$ just once. Because $\t$ is irreducible
$0 \in T(D)$ and $1 \in \tilde T(D)$.

(i) First, assume that $x \in D^{\t}$. Let $b_{i-1} = T^{-1}x$, the left endpoint of some $J_i$ and, $b_j = \tilde T^{-1}x$
the right end-point of some $\tilde J_j$. Furthermore, $i \not= j$ since the endpoints $b_{i-1}$ and $b_i$ are translated by $T$ and
$\tilde T$ to the endpoints of the interval $J^{\t}_{\t(i)}$. If
$\t$ is split, then $b_{i-1} \not= b_j$. But in any case, $ \I(x)_{-1} = i \not= j = \tilde \I(x)_{-1}$. Thus, letting
$\a = \I(x), \b = \tilde \I(x)$. We have $\a \not= \b$.

By the Keane Condition and irreducibility, $T^kx \not\in \{ 0 \} \cup D$ for $k \geq 0$. So
Lemma \ref{lem3.1} implies that $T^kx = \tilde T^kx$ and
$ \I(x)_{k}  = \tilde \I(x)_{k}$ for all $k \geq 0$. Thus, the pair $\a, \b$ is positively asymptotic.

Conversely, assume that
$\a = \I(x)$ and $\b = \tilde \I(y)$ are positively asymptotic. By shifting down we may assume that $\I(x)_k =  \tilde \I(y)_k$ for all $k \geq 0$ and
that $T^kx \not\in \{0 \} \cup D$ for all $k \geq 0$. Again Lemma \ref{lem3.1} implies that $T^kx = \tilde T^kx$ for all $k \geq 0$.
It follows that $\tilde \I(x)_k = \I(x)_k = \tilde \I(y)_k$ for all $k \geq 0$. From Theorem \ref{theo4.4} (ii) it follows that $x = y$.
So $\a = \I(x)$ and $\b = \tilde \I(x)$. If $\a \not= \b$, $x \not\in R$ and so the $T$ orbit of $x$ hits $\{ 0 \} \cup D$.

Now we can  move in the negative direction and shift so that we may assume that $T^{-1}x \in \{ 0 \} \cup D$ and $T^kx \not\in \{ 0 \} \cup D$
for all $k \geq 0$. So again $T^kx = \tilde T^kx$ for all $k \geq 0$. Note that if $T^{-1}x = 0$ then $T^{-2}x \in D$. Thus,
$x \in T( \{ 0 \} \cup D) \setminus \{ 0 \} = D^{\t}$.

(ii) Apply (i) to $T^{-1}$ which is also irreducible and satisfies the Keane Condition.

(iii) If  $\I(x)  = \tilde \I(y)$, then the pair is trivially positively asymptotic. The proof of (i) essentially shows that $T^Nx = T^Ny$ for
some $N \in \N$ and so $x = y$. If the orbit of $x$ hits $D$ then the proof of (i) shows that $\I(x) \not= \tilde \I(x)$. It follows that
a point $x$ with $\I(x)  = \tilde \I(x)$ must lie in $R$.

(iv) We use the following
\begin{itemize}
\item[(a)] If $d \in D$, then $T^{-k}d \not\in \{ 0 \} \cup D^{\t}$ for $k \geq 0$ and $T^{-k}d \not\in \{ 0 \} \cup D$ for $k \geq 1$
by irreducibility and the Keane condition.
By Lemma \ref{lem3.1} applied to the inverse transformations, $T^{-k}d = \tilde T^{-k}d$ for $k \geq 0$ and $\I(d)_{-k} = \tilde \I(d)_{-k}$
for $k \geq 1$. \vspace{.5cm}

\item[(b)]By irreducibility $T^{-1}0 \in D$ so by (a)  $T^{-k}(T^{-1}0)= \tilde T^{-k}(T^{-1}0)$
and for $k \geq 1$ $\I(T^{-1}0)_{-k} = \tilde \I(T^{-1}0)_{-k}$. Similarly, for
$k \geq 0$, $T^{-k}(\tilde T^{-1}1)= \tilde T^{-k}(\tilde T^{-1}1)$ and for $k \geq 1$ $\I(\tilde T^{-1}1)_{-k} = \tilde \I(\tilde T^{-1}1)_{-k}$.

\end{itemize}

Now suppose that $\t$ is not split. Thus, for suitable $x \in D^{\t}$ we have $b_{i-1} = T^{-1}x$ and $b_j = \tilde T^{-1}x$ with
$j = i-1$ so that $d = T^{-1}x = \tilde T^{-1}x \in D$. Thus, $T^{k}x = \tilde T^{k}x $ for all $k \in \Z$. We have
$i = \I(x)_{-1}, j = i-1 =  \tilde \I(x)_{-1}$. On the other hand, the proof of (i) and (a) above imply that
$ \I(x)_{k}  =  \tilde \I(x)_{k}$ for all $k \not= -1$ in $\Z$. That is,
$\I(x)$ and $\tilde \I(x)$ is a pair of distinct doubly asymptotic points.

Also, if $T0 = \tilde T1 = x$ and $T^{-1}0 = \tilde T^{-1}1$, then we can similarly see that $ \I(x)_{k}  =  \tilde \I(x)_{k}$ for all $k \not= -1, -2$ in $\Z$.
$\I(x)$ and $\tilde \I(x)$ is a pair of distinct doubly asymptotic points.

Now assume that $\t$ is fully split. We begin with a positively asymptotic pair and after shifting we can assume the
pair is $\I(x), \tilde \I(x)$ with $x \in D^{\t}$.

Since $\t$ is  split,  $d_1 = b_{i-1} = T^{-1}x$ and $d_2 = b_j = \tilde T^{-1}x$ are not equal and
$d_1 \in \{ 0 \} \cup D, d_2 \in \{ 1 \} \cup D$. Since $\t$ is fully split,  we do not have both $d_1 = 0$ and $d_2 = 1$.

Let $y_1 = T^{-1}d_1, y_2 = \tilde T^{-1}d_2$. From (a) and (b), we have
$T^{-k}y_1 = \tilde T^{-k}y_1$ and $T^{-k}y_2 = \tilde T^{-k}y_2$ for $k \geq 0$
 and for $k \geq 1$
\begin{equation}\label{eq4.3}
\begin{split}
\tilde \I(y_1)_{-k} = \I(y_1)_{-k} = \I(x)_{-k-2}, \\
\I(y_2)_{-k} = \tilde \I(y_2)_{-k} = \tilde \I(x)_{-k-2}.
\end{split}
\end{equation}

If $\I(x)$ and $\tilde \I(x)$ were negatively asymptotic, then from Equation (\ref{eq4.3}) it would follow that $\I(y_1)$ and $ \I(y_2)$
are negatively asymptotic.  It would then follow from Theorem \ref{theo4.4} that $y_1 = y_2$.

Now either $d_1 \in D$ or $d_2 \in D$. Suppose $d_1 \in D$.  then $ \tilde T^{-1}d_1 = T^{-1}d_1 = y_1 = y_2 = \tilde T^{-1}d_2$.
Applying $\tilde T$ we obtain $d_1 = d_2$ which is not true. Similarly if $d_2 \in D$.

Thus, $\I(x)$ and $\tilde \I(x)$ are not negatively asymptotic.
Consequently, pairs of distinct points of $X$ are never doubly asymptotic.
\end{proof}

Thus, we have obtained our promised result.

\begin{theo}\label{theo4.6} If an interval exchange transformation is  irreducible, fully split and satisfies the Keane Condition, then no pair of
distinct points is doubly asymptotic for the associated subshift. \end{theo}

We conclude by describing the simplest examples.

$n = 2$: The transposition $\t = (1,2)$ is irreducible and split.  If $a_1$ is chosen irrational,
then by Theorem \ref{theo3.5} the $(\aa,\t)$ interval exchange map on two intervals satisfies the Keane Condition. If we identify $0$ with $1$ then the maps
$T$ and $\tilde T$ become irrational rotations of the circle, or, equivalently, the translation by $a_1$ on the quotient $\R/\Z$. The lift to
$(X,S)$ yields a Sturmian subshift. The permutation
$\t$ is not fully split. In fact,
$T0 = \tilde T1$ and $T^{-1}0 = \tilde T^{-1}1$. In this case, $b_1 = a_1$ and, as is demonstrated in (iv) above,
 the orbit of the pair $\I(b_1), \tilde \I(b_1)$ is the
unique pair of orbits which are positively or negatively asymptotic and it is doubly asymptotic.

$n = 3$: We choose $a_1, a_2$ so that $\{ a_1, a_2, 1 \}$ is linearly independent over $\Q$.
The irreducible permutations are the three cycle $(1,2,3)$ (and its inverse) and the transposition $(1,3)$. In each case the $(\aa,\t)$ transformation
satisfies the Keane Condition by Theorem \ref{theo3.5} again.

The three cycle $(1,2,3)$ is not split. Both pairs $\I(b_1), \tilde \I(b_1)$ and $\I(b_2), \tilde \I(b_2)$ are doubly asymptotic. The maps $T$ and
$\tilde T$ are the same as those for the $n = 2$ case above with probability vector $(a_1 + a_2,a_3)$. The subshift is obtained from
the translation by $a_1 + a_2$ on  $\R/\Z$, by cutting at the points of two orbits instead of just one. See Keane's point-doubling construction
of $X$ described above.

On the other hand, the transposition  $\t = (1,3)$  is irreducible and fully split.
For such an $(\aa,\t)$ transformation no pair of
distinct points is doubly asymptotic for the associated subshift $(X,S)$.

\bibliographystyle{amsplain}

\begin{thebibliography}{10}



\bibitem{Ke} M. Keane, \emph{Interval exchange transformations}, Math. Z, {\bf 141} (1975), 25-31.


\bibitem{Ki} J.L. King, \emph{A map with topological minimal self-joinings in the sense of del Junco},
Ergod. Th. \& Dynam. Sys., {\bf 10}(1990), 745-761.


\bibitem{V} M. Viana,\emph{Ergodic theory of interval exchange maps},
Rev. Mat. Complut, {\bf 19} (2006), 7-100.
	

\end{thebibliography}

\end{document}